\numberwithin{equation}{section}
\newtheorem{theorem}{Theorem}
\newtheorem{lemma}{Lemma}
\newtheorem{corollary}{Corollary}
\theoremstyle{definition}
\newtheorem{definition}{Definition}
\newcommand{\be}[0]{\begin{equation}}
\newcommand{\ee}[0]{\end{equation}}
\newcommand{\bez}[0]{\begin{equation*}}
\newcommand{\eez}[0]{\end{equation*}}
\newcommand{\bl}[0]{\begin{lemma}}
\newcommand{\el}[0]{\end{lemma}}
\newcommand{\ep}[0]{$\hspace{\fill} \square$}
\newcommand{\paragraf}[1]{\par
\bigskip{\centerline{\bf #1}}\medskip}
\newcommand{\abs}[1]{\begin{quotation} {\small
 \centerline{{\bf Abstract}}\smallskip
#1}
\end{quotation}}
\author{A.~Magazinov\footnote{Alfr\'{e}d R\'{e}nyi Institute of Mathematics}, I.~Shnurnikov\footnote{NRU HSE}}
\title{On the number of hyperbolic manifolds of complexity $n$}
\date{}
\begin{document}
\maketitle
\abs{
We consider hyperbolic manifolds with boundary, which admit an ideal triangulation with $n$ ideal triangles and one edge. We prove that the number of these manifolds is $\exp(n\ln(n)+O(n))$.
}

\paragraf{Introduction}

Frigerio, Martelli and Petronio in \cite{FRI} considered a class $M_n$ of 3--dimensional oriented manifolds with boundaries which admit an ideal triangulation with $n$ ideal triangulations and one edge. They proved that the complexity in Matveev sense of manifolds in $M_n$ equals $n$, that the manifolds could be supplied with hyperbolic metrics with geodesic boundaries, and that there are at least $O\left(\frac{6^n}n\right)$ manifolds in $M_n$. We present the asymptotic of the number $\ln |M_n|$:
$$
n\ln(n) +n\ln\left(\frac 2{3e}\right)+O(\ln(n)) \leq \ln |M_n| \leq n\ln(n) +n\ln(180)+O(\ln(n)).
$$

We use the correspondence (from \cite{FRI}) between number $|M_n|$ and the number of oriented special spines with $n$ vertices and one two--dimensional cell. We also use Bollobas' bound of the number of $r$--regular simple graphs with $n$ vertices.


\begin{definition}
A special spine is a finite connected two--dimensional cell complex, such that each vertex is incident to 4 edges (with multiplicities) and each edge is incident to three two--dimensional cells (with multiplicities). The regular neighborhood of the inner point of an edge is homeomorphic to a ``book with 3 pages'', the regular neighborhood of a vertex is homeomorphic to a cone on the edges of tetrahedron. A special spine is orientable, if it could be immersed into an oriented manifold.
\end{definition}

Let us call connected graphs without loops and multiple edges as simple graphs. Let us call graphs with all its vertices of fixed degree $
r$ as $r$--regular graphs. B.~Bollobas in \cite{Bol} estimated the number $|U_r(n)|$ of simple regular $r$--graphs with $n$ vertices (for even $nr$):

$$
\frac{e^{-\tfrac{r^2-1}{4}}(rn)!}{(\tfrac {rn}2)!2^{\tfrac{rn}2}(r!)^nn!}
$$
We will use the bound 
$$
\ln(U_4(n))=n \ln(n) +n\ln\left(\frac{2}{3e}\right)+O(\ln(n)).
$$


\paragraf{Main part.}

For $G \in A_n$ let $P(G)$ be a class of oriented special spines with singularity graph $G$ and with minimal number of cells (amoung all oriented special spines with singularity graph $G$).
For a spine $S \in P(G)$ let us choose two cells $(e_i,e_j)$ and count the number $v(e_i,e_j)$ of vertices, such that all incident to them edges belong to chosen cells.  Let $t(S)$ be the maximum of $v(e_i,e_j)$ for all pairs of cells $(e_i,e_j)$.

We shall consider a neighbourhood of $G$ in a spine $S$ with $r$ two--dimensional cells as a graph $G$ with $r$ glued cylinders, were one circle of cylinder is mapped into $G$ and the other is called the boundary line. 

\begin{definition}
An operation {\it rotation along an edge $e$} transforms a special oriented spine $P$ into a special oriented spine with the same singularity graph in the following way. Let the edge $e$ belong to cells $f_1, f_2$ and $f_3$. Let us consider instead of whole $P$ the neighbourhood of singularity graph in $P$. Then $f'_1$, $f'_2$ and $f'_3$ will denote cylinders, lying in $f_1, f_2$ and $f_3$ and in the neighbourhood of singularity graph. Let us cut the edge $e$ and the free boundaries of $f'_1$, $f'_2$ and $f'_3$. Let us glue again cutted free boundaries of $f'_1$, $f'_2$ and $f'_3$, but with a cyclic rotation on one side of the cut (i.e. $f'_1$ left with $f'_2$ right, $f'_2$ left with $f'_3$ right, $f'_3$ left with $f'_1$ right). Then we glue discs to obtained free boundaries of cylinders. Let us note that the number of two--dimensional cells may change.  
\end{definition}

\begin{lemma}
\label{lemma_2cells_per_edge}
Let $S \in P(G)$. Then every edge of $G$ belongs to at most 2 two--dimensional cells. 
\end{lemma}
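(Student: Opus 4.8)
The plan is to argue by contradiction, using the rotation operation as the one tool that alters the number of cells while keeping the singularity graph fixed. Suppose, contrary to the claim, that some edge $e$ of $G$ is incident to three \emph{distinct} two--dimensional cells $f_1,f_2,f_3$. I will show that the rotation along $e$ yields an oriented special spine $S'$ with the same singularity graph $G$ but with strictly fewer two--dimensional cells, contradicting the minimality built into the definition of $P(G)$. By the definition of the rotation we may assume for free that $S'$ is again an oriented special spine with singularity graph $G$; all the work is in bookkeeping the cell count.

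The first step is to convert ``number of cells'' into a count of boundary lines. Each two--dimensional cell, intersected with a neighbourhood of $G$, is a cylinder whose free boundary is a single circle, so the number of cells equals the number of boundary--line circles, and distinct cells give distinct circles. Over the edge $e$ the three cells contribute three boundary--line arcs $\beta_1,\beta_2,\beta_3$, each joining the two endpoints $v,w$ of $e$. Deleting these three arcs from the boundary--line $1$--manifold leaves a collection of arcs that pairs up the six endpoints of $\beta_1,\beta_2,\beta_3$; call this matching $R$. I would then verify the short combinatorial fact that $f_1,f_2,f_3$ being \emph{three distinct} cells forces each $\beta_i$ to be closed up by $R$ with a single return arc joining its own two endpoints, i.e. $R$ consists of three ``bigons'' and each $f_i$ meets $e$ as a bigon. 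Indeed, any return arc from $\beta_i$ that reached an endpoint of another $\beta_j$ would place $\beta_i$ and $\beta_j$ on the same circle, hence $f_i=f_j$.

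The second step is to compute the effect of the rotation. By its definition it cuts the three arcs at a common cross--section and reglues them with a cyclic shift on one side, so it replaces the ``identity'' pairing of the six endpoints by a $3$--cycle. Tracing the resulting $1$--manifold then shows that the three bigons merge into a single circle running through all six endpoints; thus the cell count drops by exactly two. This contradicts $S\in P(G)$, and the lemma follows. I would also record, briefly and without over--claiming, that in the remaining cases (at most two distinct cells at $e$) there are matchings $R$ on which the same rotation leaves the number of circles unchanged; this is why the honest bound is ``at most $2$'' rather than ``at most $1$''.

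The main obstacle I anticipate is not the cycle--counting, which is a short permutation computation, but making the dictionary between the geometric rotation and this combinatorial repairing airtight: one must confirm that cutting and regluing with a cyclic rotation on one side realizes precisely the $3$--cycle on the six boundary--line endpoints, and that the orientation data is respected so that $S'$ is genuinely an \emph{oriented} special spine. The most delicate point is justifying that the three arcs $\beta_1,\beta_2,\beta_3$ have exactly the claimed six endpoints, so that the rest of the boundary line really is a matching on them and the count reduces to the clean permutation picture above.
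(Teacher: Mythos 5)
Your proposal is correct and follows essentially the same route as the paper: assume an edge lies on three cells, perform the cyclic rotation along that edge, and contradict the minimality defining $P(G)$ by showing the cell count drops. Your circle-counting via the matching $R$ merely makes explicit (three boundary circles merging into one, so the count drops by exactly two) what the paper's proof asserts in one line as ``the number of cells decrease.''
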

\begin{proof} Let us suppose that an edge $e \in G$ belongs to 3 two--dimensional cells. We could cut the $e$ at the middlepoint and cut the boundary lines, which pass near $e$. If we fix an orientation of $e$, then we obtain a cyclic order of boundary lines near $e$. We rotate the parts of boundary lines clockwise on the one part of $e$ in such a way that a boundary line will glue with the next boundary line according to cyclic order. We glue rotated parts of boundary lines with unrotated ones. So we get a new oriented spine with the same singularity graph, and the number of cells decrease. It is a contradiction to $S \in P(G)$.
\end{proof}

\begin{lemma}
\label{parallel}
Let $S \in P(G)$ and an edge $e \in G$ belongs to 2 two--dimensional cells. Then if we choose an orientation on the boundary lines of each two--dimensional cell, then two boundary lines along $e$, which belong to one two--dimensional cell, will have parallel orientation. 
\end{lemma}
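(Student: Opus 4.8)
The plan is to reduce the statement to a combinatorial claim about the boundary lines and then contradict the minimality of the number of cells that is built into the definition of $P(G)$, by means of the rotation move. First I would fix the local model at $e$. By Lemma \ref{lemma_2cells_per_edge} the three sheets meeting along $e$ split as two sheets of one cell, say $A$, and one sheet of the other cell, say $B$; in a cyclic sequence of three sheets of types $A,A,B$ the two sheets of $A$ are necessarily adjacent. Orienting the cell $A$ orients its cylinder, hence its boundary line, and by the collar computation — the free circle of the cylinder is the push-off of the attaching circle $\partial A$, oriented by the outward-normal-first convention — the direction in which a boundary-line strand of $A$ runs along $e$ records exactly the direction in which the corresponding arc of $\partial A$ traverses $e$. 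Thus ``the two boundary lines of $A$ along $e$ are parallel'' is equivalent to ``the two arcs of $\partial A$ lying over $e$ run over $e$ in the same direction,'' and I must exclude the antiparallel case. Note that orientability by itself will not suffice: an oriented disc in an oriented ball may perfectly well run over a fixed segment twice with opposite directions, so the minimality of $S$ has to be used.

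Assume then that the two boundary lines of $A$ along $e$ are antiparallel. Representing the neighbourhood of $G$ as $G$ with the $r$ cylinders glued, I would encode the effect of the rest of the spine on the three boundary-line strands at $e$ as a single oriented reconnection; since sheets $1,2$ lie on the one boundary circle of $A$ and sheet $3$ on the boundary circle of $B$, the number of two-dimensional cells equals the number of boundary circles read off from this reconnection. The rotation along $e$ replaces the straight internal gluing of the three cut strands by the cyclic one of the Definition, i.e. twists them by the $3$-cycle $\rho=(1\,2\,3)$, and by construction again yields an \emph{oriented} special spine $S'$ with the same singularity graph $G$. The task is to compare the number of boundary circles of $S$ and of $S'$.

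The main obstacle, and the one point at which the orientation of $S$ enters, is that this comparison is not the naive count of cycles of a permutation of three symbols: the strands are oriented, so both the reconnection and the twist live in the group of signed permutations, and the signs are precisely the directions in which the strands run along $e$. I would carry out this signed count and show that, exactly in the antiparallel case, the twist $\rho$ merges the circle of $A$ with the circle of $B$ into a single circle, so that $S'$ has strictly fewer cells than $S$, whereas in the parallel case no such reduction occurs. Such an $S'$, with the same singularity graph, the same orientability, and fewer cells, contradicts $S\in P(G)$. I expect the unsigned count to register no change at all (so that one is forced to track the signs), which makes the signed bookkeeping of the circle count the delicate step; the local set-up and the verification that $S'$ is again an oriented special spine are routine.
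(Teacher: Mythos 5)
Your proposal is correct and follows essentially the same route as the paper: assuming the two strands of one cell run antiparallel along $e$, you apply the rotation along $e$ and observe that the cyclic regluing merges the boundary circles of the two cells into one, producing an oriented spine with the same singularity graph and strictly fewer cells, contradicting $S \in P(G)$. The paper's proof is exactly this (it asserts the cell decrease without the explicit circle count you outline), so your signed bookkeeping is just a more careful execution of the same argument.
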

\begin{proof}
Let us  suppose the contrary. Analogously to the proof of lemma 1 we cut the edge $e$ in the middlepoint and cut the boundary lines. We rotate the parts of boundary lines clockwise on the one part of $e$ in such a way that a boundary line will glue with the next boundary line according to cyclic order. We glue rotated parts of boundary lines with unrotated ones. So we get a new oriented spine with the same singularity graph, and the number of cells decrease. It is a contradiction to $S \in P(G)$.
\end{proof}

\begin{lemma}
\label{lemma_2cellspervertex}
If $S \in P(G)$ then for every vertex $v \in G$ all incident to $v$ edges belong to at most 2 two--dimensional cells.
\end{lemma}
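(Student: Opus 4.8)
The plan is to reduce the statement to a purely local problem at the vertex $v$ and then argue by contradiction exactly as in Lemmas \ref{lemma_2cells_per_edge} and \ref{parallel}, by exhibiting a cell-decreasing rotation. First I would fix the local model. Since the regular neighbourhood of $v$ is a cone over the $1$-skeleton of a tetrahedron, the link of $v$ is the graph $K_4$, whose four vertices correspond to the four edges of $G$ incident to $v$ and whose six edges correspond to the six wings (sheets) emanating from $v$. Each wing lies in a single $2$-cell, so I label every edge of $K_4$ by the cell containing the corresponding wing (a ``colour''). A wing sits between two of the edges of $G$, and the three wings around a fixed edge of $G$ are exactly the three $K_4$-edges meeting the corresponding $K_4$-vertex; since each page of the book along an edge stays in one cell all along the edge, the set of cells meeting an edge of $G$ is precisely the set of colours at the corresponding $K_4$-vertex. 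In this language Lemma \ref{lemma_2cells_per_edge} says that every vertex of $K_4$ carries at most two colours, and the claim to prove is that at most two colours occur in total.

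Next I would carry out the combinatorial enumeration. Assume for contradiction that at least three colours occur. The sum over vertices of the number of colours present equals the sum over colour classes of the number of vertices they cover, and by Lemma \ref{lemma_2cells_per_edge} it is at most $4 \cdot 2 = 8$. A single-edge class covers $2$ vertices, a triangle or a two-edge path covers $3$, while a $4$-cycle, a three-edge star or path, or any class with four or more edges covers all $4$. Since at least three non-empty classes partition the six edges of $K_4$, the bound $8$ leaves, up to the symmetry of the tetrahedron, only two colourings: (i) one colour on a $4$-cycle of $K_4$ together with its two remaining (opposite) edges in two further colours, and (ii) one colour on a triangle, a second on a two-edge path, and a third on the last edge. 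In particular the constraint of Lemma \ref{lemma_2cells_per_edge} alone is insufficient, so both configurations must be excluded by hand.

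Finally I would exclude each configuration. In both (i) and (ii) every edge of $G$ at $v$ splits its three pages as $2+1$ between two cells, so Lemma \ref{parallel} applies to each such edge and pins down the orientations of the two coinciding boundary lines. I would then perform the rotation operation along an edge of $G$ meeting the cell that touches $v$ in a single wing (a singleton colour): the cyclic reshuffling of the three free boundaries splices the boundary line of that cell into a neighbouring one, so the resulting oriented spine has the same singularity graph $G$ but strictly fewer $2$-cells, contradicting $S \in P(G)$.

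The main obstacle is precisely this last surgery: one must check that the rotation genuinely decreases the number of boundary circles (equivalently, raises the genus of the thickened neighbourhood of $G$) and preserves orientability in each of the two configurations. The parallelism supplied by Lemma \ref{parallel} is exactly what fixes the local routing of the boundary lines, and hence the sign of the change in the face count. If it turns out that a single rotation leaves the count unchanged in one of the cases, I would instead use a two-step move, first a rotation creating an edge incident to three \emph{distinct} cells and then the reduction of Lemma \ref{lemma_2cells_per_edge}, to force the strict decrease.
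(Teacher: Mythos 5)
Your reduction to the link $K_4$ and the counting argument are sound, and they recover exactly the two configurations hidden in the paper's case analysis (the paper's case (ii)(3) is your 4-cycle plus two opposite edges; its cases (i), (ii)(1), (ii)(2) are your triangle plus path plus edge). But the exclusion step contains a genuine gap: the claim that a rotation along an edge meeting the singleton-colour cell \emph{strictly decreases} the number of $2$-cells is false. If the three boundary arcs along that edge lie in cells $X,Y,Y$ and the two $Y$-arcs are parallel --- which is precisely what Lemma \ref{parallel} forces for $S\in P(G)$ --- then cutting the three arcs and regluing with a cyclic shift produces again exactly two boundary circles: one can trace that one new circle picks up the $X$-path together with one $Y$-path, and the other new circle is the remaining $Y$-path. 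The count drops only when the arcs are anti-parallel or lie in three distinct cells, which is exactly why the rotations in Lemmas \ref{lemma_2cells_per_edge} and \ref{parallel} work but yours does not. The paper is aware of this: in the 4-cycle configuration it uses the rotation not to lower the count but to produce a new spine with the \emph{same} number of cells (hence still in $P(G)$) in which the edge $AO$ now meets three distinct cells, and only then invokes Lemma \ref{lemma_2cells_per_edge}; moreover this requires checking how the boundary of the big cell traverses the four wings (``$AOB$ is a neighbour of a segment other than $COD$'') to know which rotation to perform. Your fallback ``two-step move'' is essentially this argument, but you leave it unverified, and the verification is the actual content of the hard case.

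There is a second, smaller oversight: in the triangle configuration no rotation is needed at all, because Lemma \ref{parallel} is already violated. The cell covering a triangle of the link has two boundary arcs along each of three edges of $G$, and parallelism at each edge forces an orientation of the link-triangle's sides in which every vertex is a source or a sink --- impossible for an odd cycle. You instead assert that Lemma \ref{parallel} ``pins down the orientations'' in this case and build a rotation argument on top of that; the premise is contradictory, so the construction you describe never arises, and the argument as stated does not exclude anything. Replacing your exclusion step by (a) the parity contradiction for the triangle case and (b) the paper's count-preserving rotation plus Lemma \ref{lemma_2cells_per_edge} for the 4-cycle case would turn your enumeration into a complete, and arguably cleaner, proof.
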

\proof
Suppose the contrary. Let $OA,OB,OC$ and $OD$ be 4 edges incident to a vertex $O \in G$.  

Without loss of generality we can assume that $OA$ belongs to 2 two--dimensional cells $e_1$ and $e_2$, and boundary line of $e_1$ passes through $AOC$ and boundary lines of $e_2$ pass through $AOB$ and $AOD$ correspondingly. There is at least one more cell $e_3$ with boundary passing through $O$.

Let us consider the following cases.

Case (i). $OB$ does not belong to a cell other than $e_1$ and $e_2$. Then $COD$ belongs to $e_3$. By lemma \ref{lemma_2cells_per_edge} $BOD$ belongs to $e_2$. By lemma \ref{lemma_2cells_per_edge} $BOC$ belongs to $e_1$. 
Considering the fragments $AOB$, $AOD$ and $BOD$ of the boundary of $e_2$ we get a contradiction, as we cannot choose an orientation on them to satisfy lemma \ref{parallel}.

Case (ii). The boundary of $e_3$ passes through $OB$. Then by lemma \ref{lemma_2cells_per_edge} $OB$ can belong only to the boundaries of
$e_2$ and $e_3$. Hence we have the following subcases:

(1) $e_3$ contains $COB$, $e_2$ contains $DOB$;

(2) $e_3$ contains $COB$ and $DOB$;

(3)  $e_3$ contains $DOB$, $e_2$ contains $COB$.

In the case (1) $COD$ cannot belong to $e_2$, so the edge $OD$ contradicts to lemma \ref{parallel}.

In the case (2) $COD$ belongs to $e_3$ and we have a contradiction to lemma \ref{parallel} in either $OC$ or $OD$.

In the case (3) $COD$ belongs to $e_2$.  Let us consider 4 segments $AOB, AOD, COB$ and $COD$ on the boundary of the cell $e_2$. The segment $AOB$ is a neighbour of two segments, at least one of which is not $COD$. So, without loss of generality we may assume that the boundary of $e_2$ passes through $AOB$ and then through $COB$ (not through $AOD$ or $COD$). Then we rotate along the edge $OB$ and glue $AOB$ with $BOD$ into cell $e'_3$. So the edge $AO$ belongs to three cells: $e_1, e'_2$ and $e'_3$, which contradicts to lemma \ref{lemma_2cells_per_edge}.
\ep 	

\begin{definition}
An operation of {\it gluing} a neighbourhood or singularity graph of the oriented spine $T$ with one two--dimensional cell into a neighbourhood or singularity graph of spine $S \in P(G)$ is defined in the following way.
Let us cut an edge $e\in G$ and consider 3 cutted boundary lines along $e$: $l_1, l_2$ and $l_3$. After we cut $e$ we get pairs $(l_1',l_1'')$, $(l_2',l_2'')$ and $(l_3',l_3'')$ of endpoints of $l_1,l_2$ and $l_3$.
Let us consider an oriented spine $T$ and cut an edge $f$ of singularity graph $T$.  Let $m_1, m_2$ and $m_3$ boundary lines along $f$. After we cut $f$ we get pairs of endpoints $(m_i',m_i'')$ for $i=1,2,3$. Suppose (it is a significant assumption) that if we travel from $(m_1',m_2',m_3')$ along cutted $m_1,m_2,m_3$ we will reach $(m_1'',m_2'',m_3'')$ (in another order, so the case $m_1'\rightarrow m_2'', m_3'\rightarrow m_1'', m_2'' \rightarrow m_3''$ is forbidden). We will call such edges $f$ {\it cutable}.

Then we could glue $l_i'$ to $m_i'$ and $l_i''$ to $m_i''$ and obtain a new spine $S'$. The number of two--dimensional cells of $S$ equals to the number of two--dimensional cells of $S'$. 
\end{definition}

\begin{lemma}
\label{Loop_orientation}
Let a graph $G$ be a singularity graph of an oriented spine. Let $A,B$ and $C$ be vertices of $G$ such that $G$ has edges $BA$ and $CA$ and a loop in a vertex $A$. Let $l_1, l_2$ and $l_3$ be boundary lines passing near edge $BA$. Let $m_1, m_2$ and $m_3$ be boundary lines passing near edge $CA$, so that $l_1, l_2, l_3$ passing through the loop turn to $m_1,m_2$ and $m_3$ correspondingly. Then the cyclic order of $l_1, l_2, l_3$ is different to the cyclic order of $m_1,m_2,m_3$. 
\end{lemma}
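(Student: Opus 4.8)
The plan is to extract both cyclic orders from a single orientation, using only the hypothesis that the spine is orientable, and to reduce the whole statement to one computation about $K_4$ on an oriented sphere. Orientability provides an orientation of the ambient oriented $3$--manifold, hence of a regular neighbourhood $N$ of the singularity graph, hence of the link $2$--sphere $S_A$ of the vertex $A$. In $S_A$ the singularity graph cuts out the $1$--skeleton of a tetrahedron, i.e. a copy of $K_4$ whose four vertices are the four edge--directions at $A$: the direction of $BA$, the direction of $CA$, and the two ends of the loop, which I will call $P$ and $Q$. The three two--dimensional cells incident to a given edge at $A$ meet $S_A$ in the three $K_4$--edges emanating from the corresponding $K_4$--vertex, so the cyclic order of the boundary lines $l_1,l_2,l_3$ along $BA$ (respectively $m_1,m_2,m_3$ along $CA$) is exactly the cyclic order, induced by the orientation of $S_A$, of the three $K_4$--edges at the $K_4$--vertex of $BA$ (respectively of $CA$). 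The point of passing to $S_A$ is that both orders are now read off from \emph{one} orientation, which is what makes them comparable.

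First I would pin down the correspondence $l_i\leftrightarrow m_i$ named in the statement. Each boundary line near $BA$ enters the neighbourhood of $A$ through one of the three wings at the $BA$--direction: the wing spanned by the $BA$-- and $CA$--directions carries its strand straight across to $CA$, while the other two wings carry their strands to the two ends $P$ and $Q$ of the loop. Matching a strand that runs toward $P$ on the $BA$--side with the strand that runs toward $P$ on the $CA$--side (and likewise for $Q$), with the remaining pair forced, is the identification ``passing through the loop''. Thus $l_i\mapsto m_i$ sends the $CA$--direction seen from $B$ to the $BA$--direction seen from $C$, and fixes the two loop--directions $P,Q$ that are common to both vertices.

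It then remains to compare the two rotations, and this is now a statement purely about $K_4\subset S^2$: for any two vertices $u,v$ of $K_4$, the rotation of the three edges at $u$ and the rotation of the three edges at $v$, matched by sending the edge $uv$ to $vu$ and fixing the two opposite vertices, are \emph{opposite}. I would verify this once in a concrete planar model (three outer vertices, one central vertex), and since the statement is invariant under orientation--preserving homeomorphisms of $S^2$ the single computation suffices. Applied with $u$ the $BA$--vertex and $v$ the $CA$--vertex, and transporting the rotation of $(l_1,l_2,l_3)$ along the correspondence of the previous paragraph, it gives precisely the reverse of the rotation of $(m_1,m_2,m_3)$, which is the assertion of the lemma.

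I expect the main obstacle to be the orientation bookkeeping in the second step rather than the $K_4$ computation. The identification $l_i\leftrightarrow m_i$ passes through the vertex $A$ and runs once along the loop, and at the returning end of the loop the edge points \emph{into} $A$ rather than out of it; with respect to the orientation of $S_A$ this contributes a reversal, and one must check that it combines with the $K_4$--reversal to leave the two cyclic orders genuinely distinct rather than cancelling. The cleanest way to keep the signs honest is probably to orient $BA$ and $CA$ by the flow of the boundary lines themselves, so that the two rotations are read from $S_A$ with controlled conventions, and to verify directly that the strand running along a wing and then along the loop reconnects as claimed. Making this tracing precise, and matching it to the rough phrase ``passing through the loop'', is the delicate part; the orientability hypothesis is exactly what forbids the reflected gluing that would otherwise spoil the conclusion.
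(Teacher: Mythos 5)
Your framework is sound and is, in fact, far more than the paper offers: the paper's entire proof is the sentence ``It follows from the definition of oriented spine,'' so unpacking it via the oriented link sphere $S_A$, the copy of $K_4$ that the spine cuts out in it, and the induced rotations at the four $K_4$--vertices is a legitimate way to make the claim precise. Your $K_4$ computation is also correct: under the matching that sends $uv$ to $vu$ and fixes the two remaining vertices, the rotations at adjacent vertices of $K_4\subset S^2$ are opposite. The gap is precisely in the step you yourself flag as delicate, and it is not minor: the identification you adopt as ``passing through the loop'' is wrong. A boundary line that leaves $BA$ toward the loop--end $P$ travels along the loop and re--enters the vertex at the \emph{other} end $Q$ before proceeding to $CA$. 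Labelling each strand by the $K_4$--edge it traverses at $A$ (so $b,c,p,q$ are the directions of $BA$, $CA$ and the two loop--ends), a direct trace of the strands shows that in every orientable gluing of the three pages along the loop for which the lemma's hypothesis can hold at all, the induced correspondence is $bc\leftrightarrow cb$, $bp\leftrightarrow cq$, $bq\leftrightarrow cp$: it \emph{swaps} $P$ and $Q$, whereas your matching fixes them.

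This matters because transposing two of the three elements reverses cyclic orders. So with your stated convention --- both cyclic orders read off as the $S_A$--rotations at the two $K_4$--vertices, i.e.\ both edges oriented \emph{out of} $A$ --- the true correspondence carries the rotation at the $BA$--vertex exactly onto the rotation at the $CA$--vertex: your $K_4$--reversal and the loop's $P\leftrightarrow Q$ reversal cancel, and the computation, done with the correct matching, proves the \emph{negation} of the lemma. What rescues the statement is a third reversal that your write--up never performs: the orders must be compared with respect to the direction of travel of the lines ($B\to A$ along $BA$ but $A\to C$ along $CA$), so one edge points into $A$ and the other out, which flips exactly one of the two $S_A$--rotations; with all three reversals the count is odd and the cyclic orders genuinely differ. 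You anticipate exactly this danger (``one must check that it combines with the $K_4$--reversal\dots rather than cancelling'') but leave the check undone, and your explicit second paragraph resolves it incorrectly. Since everything else in the argument is setup, this unresolved sign bookkeeping is the entire content of the lemma: your proposal lands on the correct conclusion only because the wrong matching and the missing direction--of--travel flip happen to cancel. To repair it, trace the strands to establish the $P\leftrightarrow Q$ swap, fix the direction--of--travel convention on both edges, and only then invoke your $K_4$ fact.
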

\proof
 It follows from the definition of oriented spine. 
\ep

\begin{lemma}
Let $P$ be an oriented special spine with singularity graph $G \in A_n$, with minimal number of two--dimensional cells (minimal among all oriented special spines with singularity graph $G$). Then $P$ has at most 2 two--dimensional cells.
\end{lemma}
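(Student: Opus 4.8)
The plan is to work entirely inside the neighbourhood of the singularity graph $G$, representing each two-dimensional cell by its boundary line (the free circle of the corresponding cylinder). The number of cells of $P$ then equals the number of boundary lines, and, since each edge of $G$ is incident to three pages, the boundary lines together run three times along every edge; at each $4$-valent vertex the six ``wings'' of the cone structure match up the twelve strand-ends that arrive there. I would encode a cell as a \emph{colour} carried by its boundary line, so that Lemma~\ref{lemma_2cells_per_edge} says at most two colours occur among the three strands of any edge, and Lemma~\ref{lemma_2cellspervertex} says at most two colours occur among the six wings at any vertex.

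First I would record a propagation principle. If $v,w$ are joined by an edge $e$, every strand of $e$ is one arc of a single boundary line, lying in a wing at $v$ at one end and in a wing at $w$ at the other; hence its colour lies in the colour set $S_v$ at $v$ and in the colour set $S_w$ at $w$ simultaneously. Therefore the colours occurring on $e$ form a nonempty subset of $S_v\cap S_w$, so adjacent vertices always share a cell, and if $S_v=\{X,Y\}$ and $S_w=\{Y,Z\}$ with $X,Y,Z$ distinct, then all three strands of $e$ carry the single colour $Y$, so $e$ is monochromatic and all three wings along $e$ at $v$ and at $w$ are coloured $Y$. Combined with the connectivity of $G$ and $|S_v|\le 2$, this pins the colouring down rigidly: a third colour can appear only ``across'' a monochromatic edge.

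Assuming for contradiction that $P$ has at least three cells, I would then locate a vertex $w$ at which a third colour $Z$ first appears along such a monochromatic edge and examine the six wings at $w$ together with the orientations of the incident strands. Here Lemma~\ref{parallel} forces the two same-coloured strands of any bichromatic edge to run parallel, and Lemma~\ref{Loop_orientation} fixes the cyclic orders forced through a loop; these orientation constraints exclude precisely the wing-colourings that would be rotation-stable. The target is to show that one can rotate along an edge incident to $w$ so that two distinct boundary lines merge into one, strictly decreasing the number of cells and contradicting the minimality of $P$ in its class.

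The main obstacle is this last step: exhibiting a cell-decreasing rotation once three colours are present. Individual edges are already rotation-stable (that is exactly the content of Lemmas~\ref{lemma_2cells_per_edge} and~\ref{parallel}), so the reduction must come from the interaction between a monochromatic edge and the wing pattern at its endpoint, and checking that the orientation data never lets all such configurations be simultaneously stable requires a finite but careful case analysis of the $X/Y/Z$ wing-colourings at $w$, tracking in each case how the rotation recombines the three strands into boundary lines. I expect this case analysis, driven by Lemmas~\ref{parallel} and~\ref{Loop_orientation}, to be the technical heart of the argument.
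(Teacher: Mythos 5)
Your colouring set-up and propagation principle are sound, and they faithfully mirror the paper's framework: your colour sets $S_v$ correspond to the paper's set $U$ of vertices all of whose incident edges lie in two fixed cells $f_1,f_2$, and your ``monochromatic frontier edge'' is exactly the paper's observation that an edge joining $U$ to its complement lies in exactly one of the two cells. The gap is your final step, and it is not merely hard but impossible: no rotation, and no sequence of rotations, applied to $P$ can strictly decrease the number of cells. A rotation produces a spine with the \emph{same} singularity graph $G$, and $P$ is by hypothesis minimal among all spines with singularity graph $G$, so there is simply no cell-decreasing move for your case analysis to find. One can also see this locally: encode the way the three boundary strands along an edge $e$ reconnect outside $e$ by a permutation $\tau\in S_3$, so that the cells through $e$ correspond to the cycles of $\tau$, and after a rotation to the cycles of $\tau c$ with $c$ a $3$--cycle. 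By Lemma \ref{lemma_2cells_per_edge}, $\tau$ is a transposition or a $3$--cycle. If $\tau$ is a transposition, $\tau c$ is odd, hence again a transposition: the count through $e$ stays $2$. If $\tau$ is a $3$--cycle, $\tau c$ is even, hence the identity or a $3$--cycle: the count stays $1$ or jumps to $3$. Thus \emph{every} single rotation preserves or increases the number of cells; your own remark that ``individual edges are already rotation-stable'' is precisely the reason your plan cannot close, not a stepping stone toward it.

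The paper turns this stability into a tool rather than an obstacle. Rotations along bichromatic edges preserve the total number of cells, hence preserve minimality, but they change \emph{which} wings at a vertex belong to which of the two cells. At a vertex $O\in U$ with $OA$ in both cells and $OC$ in only one, the paper runs a case analysis over the four possible wing distributions (one case is killed outright by Lemma \ref{parallel}; the others are transformed into one another by rotations along $OA$ or $OB$) and concludes that, after suitable count-preserving rotations, the edge $OC$ can be made to lie in both cells, with $U$ and all other cells unchanged. Iterating this along a path $A_1,\dots,A_n$ with $A_1,\dots,A_{n-1}\in U$ and $A_n\notin U$ pushes an ``in both cells'' edge to the frontier; since $A_n\notin U$ has an incident edge in a third cell, the vertex $A_n$ then has three cells meeting it, contradicting Lemma \ref{lemma_2cellspervertex}. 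So the contradiction is with Lemma \ref{lemma_2cellspervertex}, not with minimality. To salvage your draft, replace the goal ``find a cell-decreasing rotation'' by ``find a cell-count-preserving rotation that moves the bichromatic edge one step closer to the frontier''; that is where Lemma \ref{parallel} and the wing case analysis you anticipate are genuinely needed. (Note also that Lemma \ref{Loop_orientation} plays no role in this lemma; the paper uses it only later, when gluing a loop in the proof of the first theorem.)
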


\proof
Let us consider the contrary, that $P$ has at least 3 two--dimensional cells. Let us fix first 2 two--dimensional cells $f_1$ and $f_2$. Let us consider the set $U$ of vertices of $P$, such that all incident to them edges belong to fixed cells $f_1$ and $f_2$. If an edge $X_1X_2$ is incident to a vertex $x_1 \in U$ an to a vertex $X_2 \notin U$, then by lemma \ref{lemma_2cellspervertex} $X_1X_2$ belongs to only one of cells $f_1$ and $f_2$ (and doesn't belong to other cells). An edge $X_1X_2$ cannot belong to other cells because $X_1 \in U$. An edge $X_1X_2$ cannot belong to both cells $f_1$ and $f_2$ because an edge incident to $X_2$ belong to another cell and so vertex $X_2$ contradicts to lemma \ref{lemma_2cellspervertex}.

Let us consider a vertex $O \in U$, such that among incident to it edges $OA, OB, OC$ and $OD$ edge $OA$ belongs to $f_1$ and $f_2$, and $OC$ belongs to $f_1$ only. We will make several rotations along edges to obtain an oriented special spine with singularity graph $G$ and minimal number of two--dimensional cells, so that cells other from $f_1$ and $f_2$ will not change. Cells $f_1$ and $f_2$ will change to cells $f'_1$ and $f'_2$ with the same set $U'=U$. And the edge $OC$ will belong to both cells 
$f'_1$ and $f'_2$. The further proof is the following. Let $A_1,\dots, A_n$ be a sequence of vertices of $G$ such that $A_1, \dots, A_{n-1} \in U$, $A_n \notin U$ and an edge $A_1A_2$ belongs to both cells $f_1$ and $f_2$. Then we make rotations so that edge $A_{i-1}A_i$ belongs to two cells $f^{(i-2)}_1$ and $f^{(i-2)}_2$ with the same set $U^{(i-2)}=U$ for $i=2, \dots, n$. But $A_n \notin U$ --- contradiction.

So $OA$ belongs to $f_1$ and $f_2$, $OC$ belongs to $f_1$  only. We have the following cases.

\begin{enumerate}
\item $AOB \in f_2$, $AOC, AOD, BOD, BOC, COD \in f_1$ \\
\item $AOB, AOD \in f_2$, $AOC, BOD, BOC, COD \in f_1$ \\
\item $AOB, BOD \in f_2$, $AOC, AOD, BOC, COD \in f_1$ \\
\item $AOB, AOD, BOD \in f_2$, $AOC, BOC, COD \in f_1$ 
\end{enumerate}

In the case (2) we rotate along $OA$ and get the case (1).

Case (1). Let us consider the part $g_1$ of boundary of $f_1$, which passes from $BOC$ to $BOD$ (i.e. $BOC$ and $BOD$ divide the boundary of $f_1$ into 2 segments, one of which is considered). We also mean that if we go along the boundary of $f_1$ from $B$ to $C$ through the $BOC$ and further, then we will pass $g_1$ before we meet $BO$ once more. If both $AOC$ and $DOC$ belong to $g_1$, then we rotate along $OB$, gluing $AOB$ and $BOD$ in one cell. We will obtain the case (3) if $AOD \in g_1$ and the case (4) if $AOD \notin g_1$. Else we rotate along $OB$ and then $OC$ will belong to two cells $f'_1$ and $f'_2$. 

Case (3). Let us consider the part $g_1$ of boundary of $f_1$, which passes from $AOC$ to $AOD$ (i.e. $AOC$ and $AOD$ divide the boundary of $f_1$ into 2 segments, one of which is considered). If not both of $BOC$ and $DOC$ belong to $g_1$, then we rotate along $OA$ and get a spine such that $OC$ belongs to two cells. If both of $BOC$ and $DOC$ belong to $g_1$, then we rotate $OA$, gluing $AOD$ and $AOB$ in one cell, and obtain the case (4).

The case (4) is impossible by lemma \ref{parallel}.
\ep 

\begin{corollary}
For every graph $G \in A_n$ there exists a special oriented spine with singularity graph $G$ and with at most 2 two--dimensional cells.
\end{corollary}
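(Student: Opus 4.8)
The plan is to read off the corollary directly from the preceding lemma, the entire substantive content having already been established there. That lemma asserts that a cell--minimal oriented special spine over a graph $G\in A_n$ carries at most two two--dimensional cells; the corollary merely repackages this as a pure existence statement. So the task reduces to producing such a minimal spine and then invoking the lemma verbatim.

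First I would observe that, by the very definition of $A_n$ as the class of graphs occurring as singularity graphs of oriented special spines, the set $\mathcal S(G)$ of all oriented special spines with singularity graph $G$ is nonempty for each $G\in A_n$. The number of two--dimensional cells of any spine is a positive integer, so the set of these cell counts, ranging over $S\in\mathcal S(G)$, is a nonempty subset of $\N$ and therefore attains a minimum. Let $P\in\mathcal S(G)$ be any spine realizing this minimum. By the preceding lemma $P$ has at most two two--dimensional cells, and since $P$ has singularity graph $G$ by construction, it is exactly the spine the corollary demands.

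The only point requiring care --- and the place where a loosely stated argument could slip --- is the legitimacy of selecting a cell--minimal spine: one must know that the minimization runs over a nonempty family and that cell numbers are bounded below, both of which hold because every spine has at least one two--dimensional cell and $A_n$ is defined precisely so that $\mathcal S(G)\neq\varnothing$. Once the minimizer is in hand there is nothing further to prove; the genuine work, namely the rotation--and--gluing reduction forcing a minimal spine down to two cells, is exactly the content of the lemma I am permitted to assume, so the corollary costs only the well--ordering of $\N$.
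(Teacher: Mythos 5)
Your proposal is correct and coincides with the paper's (implicit) argument: the corollary is stated without proof precisely because it follows from the preceding lemma by taking a cell--minimal spine over $G$, exactly as you do. Your added care about nonemptiness of the family of spines over $G\in A_n$ (so that the minimum exists) is the only content beyond the lemma, and it matches the paper's standing assumption built into the definition of $P(G)$.
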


\begin{theorem}
For $n \geq 8$ we have
$|M_n|\geq |A_{n-1}|$.
\end{theorem}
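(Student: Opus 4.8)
The plan is to build an injection $\Phi$ from $A_{n-1}$ into the set of oriented special spines with $n$ vertices and exactly one two--dimensional cell; by the correspondence of \cite{FRI} this set is in bijection with $M_n$, so such a $\Phi$ immediately gives $|M_n|\ge |A_{n-1}|$. First I would record the bookkeeping. In any special spine each vertex meets four edge--ends and each edge has two ends, so $E=2V$ and the Euler characteristic is $\chi=V-E+C=C-V$, where $C$ is the number of cells; a spine representing a manifold of $M_n$ has $V=n$, $C=1$, hence $\chi=1-n$. Now fix $G\in A_{n-1}$, a connected simple $4$--regular graph on $n-1$ vertices. By the preceding corollary there is an oriented special spine $S_G$ with singularity graph $G$ and $C(S_G)\in\{1,2\}$, and $V=n-1$. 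Choosing $S_G$ to be a fixed minimal representative makes $G\mapsto S_G$ injective, since the singularity graph of $S_G$ is $G$ itself.

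Next I would pass from $S_G$ to a one--cell spine on $n$ vertices by inserting a single vertex. Adding one vertex together with the two edges forced by $4$--regularity changes $\chi$ by $1-2+\Delta C=\Delta C-1$, so to reach $\chi=1-n$ I need $\Delta C=0$ when $C(S_G)=1$ and $\Delta C=-1$ when $C(S_G)=2$. In the one--cell case I would glue, via the gluing operation above, a fixed one--vertex spine $T$ (one vertex carrying two loops) into $S_G$ along a cutable edge; by definition this keeps the number of cells equal to one and adds the single vertex of $T$, so the result has $V=n$, $C=1$, and a loop surviving at the new vertex. In the two--cell case I would instead choose an edge $e$ whose three incident boundary lines meet both cells $f_1,f_2$ of $S_G$ --- Lemmas \ref{lemma_2cells_per_edge}, \ref{parallel} and \ref{lemma_2cellspervertex} restrict the local picture enough to guarantee such an $e$ --- and insert a vertex there by a cut--and--reglue that fuses $f_1$ and $f_2$ into one cell, realizing $\Delta C=-1$ while again adding one vertex and leaving a loop at it. In both cases Lemma \ref{Loop_orientation} is what certifies that the inserted loop can be oriented compatibly, so $\Phi(G)$ is a genuine oriented special spine with $n$ vertices and one cell.

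It remains to prove $\Phi$ injective, which I would do by exhibiting an explicit inverse on its image. Since $G$ is simple, $S_G$ has no loop, so the inserted vertex carries the only loop of $\Phi(G)$ and is marked by the forced cyclic data of Lemma \ref{Loop_orientation}; from $\Phi(G)$ one locates this vertex, deletes it, and undoes the gluing (respectively the fusion) to recover $S_G$ and hence $G$. The hypothesis $n\ge 8$ enters precisely here: for $n-1\ge 7$ the graph $G$ is large enough that the inserted local pattern cannot coincide with any pre--existing feature of $S_G$, so the marked vertex is unique and the inverse is well defined. The hard part will be the two--cell case: I must produce a vertex insertion that at once fuses $f_1$ and $f_2$, stays within oriented special spines, and carries the canonical loop--mark --- verifying that a cutable, orientation--compatible such edge $e$ always exists, and that the mark is unambiguous for every $G$ with $n\ge 8$, is where the earlier lemmas do the real work.
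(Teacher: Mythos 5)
This is essentially the paper's own argument: starting from the Corollary's oriented spine with at most two cells on $G \in A_{n-1}$, you insert one new vertex carrying a loop --- at an edge belonging to both cells in the two-cell case, where Lemma \ref{Loop_orientation} fuses them into one, and at a cell-count-preserving edge in the one-cell case --- and then recover $G$ from the unique loop, which exists because $G$ is simple. The differences are only presentational (the Euler-characteristic bookkeeping, attributing the uniqueness of the marked vertex to $n\ge 8$ rather than simply to the simplicity of $G$, and packaging the one-cell insertion as a Definition-type gluing of an auxiliary one-vertex spine $T$ whose existence you assert but do not verify, just as the paper does not verify its ``edge passed in different directions''), so the proposal matches the paper's proof at the same level of rigor.
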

\begin{proof} 
Let us consider an arbitrary graph $G \in A_{n-1}$. Then there exists a special oriented spine $S$ on the graph $G$ with at most 2 two--dimensional cells. If $S$ has one two--dimensional cell then we find an edge of $G$ which is passed by the boundary of two--dimensional cell in different directions and glue it with a loop.
 If $S$ has two two--dimensional cells we find an edge $e$ which belongs to different cells  and we glue into $e$ a loop to obtain 
a special oriented spine with $n$ vertices and one two--dimensional cell by lemma \ref{Loop_orientation}.
\end{proof}

\begin{theorem}
$\ln|M_n| \leq \ln(|A_n|)+n (1+\ln(270))$.
\end{theorem}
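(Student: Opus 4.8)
The plan is to bound $|M_n|$ by classifying each oriented special spine with $n$ vertices and a single two--dimensional cell according to its singularity graph. Using the correspondence with $M_n$, I would write $|M_n|\le\sum_{G}N(G)$, the sum ranging over the connected $4$--regular graphs $G$ that occur as singularity graphs, where $N(G)$ is the number of oriented special spines with singularity graph $G$ and exactly one cell. As each such $G$ is connected and $4$--regular, it is accounted for (after the bookkeeping for loops and multiple edges) by $A_n$, so the crude estimate $|M_n|\le |A_n|\cdot\max_{G}N(G)$ reduces the theorem to a uniform bound $N(G)\le e^{n(1+\ln 270)}$.

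First I would describe $N(G)$ combinatorially. In a spine with a single cell every edge of $G$ receives all three of its pages from that one cell, so the attaching curve of the cell traverses each edge of $G$ exactly three times, and the whole spine is recovered from $G$ together with the way this curve passes through the vertices. At a $4$--valent vertex the link is the $1$--skeleton of a tetrahedron, so the three boundary strands arriving along each incident edge are matched, through the six tetrahedral sectors, to strands on the other edges. Thus a spine over $G$ amounts to a choice, at every vertex, of an admissible matching of the twelve incident strand ends into the six sectors, constrained by the orientability conditions of Lemma \ref{parallel} and, at vertices carrying loops, of Lemma \ref{Loop_orientation}.

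Next I would bound the number of admissible matchings at one vertex by a constant and multiply over the $n$ vertices (an over--count suffices). Disregarding orientation, the six sectors pair strands of distinct edges with exactly one pair per unordered pair of edges, which already confines the local choices to a fixed finite set; imposing the parallel orientation of Lemma \ref{parallel} (and, on loops, the reversed cyclic order of Lemma \ref{Loop_orientation}) prunes this set to at most the claimed number of oriented configurations. Because the choices at distinct vertices are independent, this gives $N(G)\le c^{\,n}$ with $c$ a constant independent of $G$. Finally I would combine this local constant with the Stirling, or configuration--model, correction relating the labelled matchings used here to the isomorphism classes counted by $A_n$; this correction supplies the factor $e^{n}$ that cancels the $e$ hidden in $\ln|A_n|=n\ln n+n\ln(2/(3e))+O(\ln n)$, and together with $c^{\,n}$ yields $\max_G N(G)\cdot(\text{correction})\le e^{n(1+\ln 270)}$. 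Taking logarithms finishes the proof.

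The main obstacle is the enumeration in the previous paragraph: showing that the orientability constraints cut the naive matchings down to the stated constant, and that the bound on $N(G)$ is genuinely uniform over all admissible $G$, including those with loops and multiple edges where Lemma \ref{Loop_orientation} must be invoked. Pinning down the sharp value $270$, rather than a larger crude bound, is exactly where a careful case analysis of oriented local gluings, of the same flavour as the proof of Lemma \ref{lemma_2cellspervertex}, is required.
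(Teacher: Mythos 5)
There is a genuine gap, and it sits exactly where you wave your hands: the passage from the sum over singularity graphs to a bound in terms of $|A_n|$. The singularity graph of a spine counted by $M_n$ is a connected $4$--regular graph that may have loops and multiple edges, whereas $A_n$ counts simple graphs. So your ``crude estimate'' $|M_n|\le |A_n|\cdot\max_G N(G)$ is false as written: the sum $\sum_G N(G)$ ranges over a strictly larger class of graphs than $A_n$ enumerates, and no per-vertex bound on $N(G)$ can repair that. The paper handles this by introducing $C_n$, the set of connected degree--$4$ graphs with loops and multiple edges allowed, bounding $|M_n|\le |C_n|\cdot 18^n$ (the $18^n$ being the per-vertex local count you also describe), and then proving, by induction on $n$, the comparison $\ln\bigl(|C_n|/|A_n|\bigr)\le n(1+\ln 15)$. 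It is this graph-counting comparison --- not any local analysis at vertices --- that supplies both the additive $n$ (i.e.\ the factor $e^n$) and the factor $15$ in $270=18\cdot 15$.

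Your substitute for this step, a ``Stirling, or configuration--model, correction'' that allegedly ``cancels the $e$ hidden in $\ln|A_n|$'', is a confusion: the theorem's bound is stated relative to $\ln|A_n|$ itself, so Bollobas' asymptotic formula for $|A_n|$ never enters this proof at all; and the labelled-versus-unlabelled discrepancy of the configuration model is not the issue --- the issue is loops and multiple edges. Relatedly, your plan to recover the ``sharp value $270$'' from a careful case analysis of oriented local gluings is off target: the local constant in the paper's argument is $18$ per vertex, and $270$ is not a local quantity. Your first two paragraphs (the spine is determined by $G$ plus local matchings at vertices, pruned by the orientability constraints of Lemmas \ref{parallel} and \ref{Loop_orientation}, with independent choices across vertices) do reproduce the paper's first ingredient, but without the multigraph-to-simple-graph comparison the proof cannot close.
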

\begin{proof}
Let $C_n$ be the set of connected homogeneous graphs with $n$ vertices of degree 4 (with loops and multiple edges). For each graph $G \in C_n$ there exist at most $18^n$ oriented special spines with singularity graph $G$. So $|M_n|\leq |C_n| \cdot 18^n$. By induction on $n$ one could prove that 
$$
\ln(\frac{|C_n|}{|A_n|}) \leq n(1+\ln(15)).
$$
\end{proof}

\end{document}